\newtheorem{theorem}{Theorem}[section]
\newtheorem{prop}[theorem]{Proposition}
\newtheorem{coro}[theorem]{Corollary}
\newtheorem{remark}{Remark}
\newcommand{\qbin}[2]{\genfrac[]{0pt}{}{#1}{#2}}
\newcommand{\s}{{\sigma}}
\newcommand{\angbinom}[2]{\genfrac{\langle}{\rangle}{0pt}{}{#1}{#2}}
\def\Sym{\mathfrak{S}}
\def\Z{\mathbb{Z}}
\def\R{\mathbb{R}}
\def\N{\mathbb{N}}
\def\s{\sigma}
\def\a{\alpha}
\newcommand\myatop[2]{\genfrac{}{}{0pt}{}{#1\hfill}{#2\hfill}}
\def\des{\mathrm{des}}
\def\inv{\mathrm{inv}}
\def\lrmin{\mathrm{lrmin}}
\def\rlmin{\mathrm{rlmin}}
\def\maj{\mathrm{maj}}
\numberwithin{equation}{section}
\date{\today}
\def\and{\mathrm{d}}
\begin{document}

\title[A bi-Stirling-Euler-Mahonian polynomial]{A bi-Stirling-Euler-Mahonian polynomial}
\author{Chao Xu and Jiang Zeng}
\address{ Universite Lyon 1,
UMR 5208 du CNRS, Institut Camille Jordan\\
F-69622, Villeurbanne Cedex, France}
\email{xu@math.univ-lyon1.fr}
\address{ Universite Lyon 1,
UMR 5208 du CNRS, Institut Camille Jordan\\
F-69622, Villeurbanne Cedex, France}
\email{zeng@math.univ-lyon1.fr}
\begin{abstract} 
Motivated by recent work on (re)mixed Eulerian numbers, we provide a combinatorial interpretation of a subfamily of the remixed Eulerian numbers introduced by Nadeau and Tewari. More specifically, we show that these numbers can be realized as the generating polynomials of permutations with respect to the statistics of left-to-right minima, right-to-left minima, descents, and the mixed major index. Our results generalize both the bi-Stirling-Eulerian polynomials of Carlitz-Scoville and the Stirling-Euler-Mahonian polynomials of Butler.
\end{abstract}

\keywords{Eulerian polynomial, descent, mixed major index, left-to-right minimum, right-to-left-minimum}
\maketitle
\section{Introduction}

The Eulerian numbers $  \angbinom{n}{k}$ ($n> k\geq 0$) can be  defined by the recurrence 
\begin{equation}\label{euler's recurrence}
          \angbinom{n}{k}=(n-k)  \angbinom{n-1}{k-1}+(k+1)  \angbinom{n-1}{k},
\end{equation}
with initial condition $\angbinom{0}{0} = 1$ and $\angbinom{n}{k} = 0$ if $k \notin {0,\ldots,n-1}$.
This recurrence  immediately  implies the symmetry
$  \angbinom{n}{k}=  \angbinom{n}{n-k-1}$. 
It is well-known~\cite{FS70,Pe15} that $  \angbinom{n}{k}$ counts  the number of permutations in $\Sym_n$ with $k$ descents.
The $n$th Eulerian polynomial, which generates
  the Eulerian numbers, is defined by 
\begin{equation}\label{eulerian polynomial}
    A_n(x)=\sum_{k=0}^{n-1}\angbinom{n}{k}x^k
\end{equation}
 and has  the exponential generating function 
\begin{equation}\label{exp:Euler}
   1+\sum_{n=1}^{\infty}A_n(x)\frac{t^n}{n!}= \frac{1-x}{e^{t(x-1)}-x}.
\end{equation}
 Setting $A(r,s):=\angbinom{r+s+1}{r}$, Carlitz-Scoville~\cite{CS74}  noticed   the 
 following  symmetric formula
\begin{equation}
F(x,y):=\sum_{r,s=0}^{\infty}A(r,s)\frac{x^ry^s}{(r+s+1)!}=\frac{e^x-e^y}{xe^y-ye^x},
\end{equation}
and  further introduced  the $(\alpha,\beta)$-Eulerian numbers 
    $A(r,s\,|\,\alpha, \beta)$ by
\begin{equation}\label{CS-egf}
\sum_{r,s=0}^{\infty}A(r,s\,|\,\alpha, \beta)\frac{x^ry^s}{(r+s)!}=(1+xF(x,y))^\alpha(1+yF(x,y))^\beta.
\end{equation}
Note that  $A(r,s\,|\,\alpha, \beta)=A(s,r\,|\,\beta,\alpha)$ and  $A(r,s\,|\,1,1)=A(r,s)$.
The first few terms of the above generating function  are
\begin{multline}
   1+(\alpha  x +\beta  y) +\frac{1}{2}({\alpha^{2} x^{2}}+{\left(2 \alpha  \beta +\alpha +\beta \right) y x}+{\beta^{2} y^{2}})+\frac{1}{6}\left(
   {\alpha^{3} x^{3}}+{\beta^{3} y^{3}}+\right.\\
  \left. {\left(3 \alpha^{2} \beta +3 \alpha^{2}+3 \alpha  \beta +\alpha +\beta \right) y x^{2}}+{\left(3 \alpha  \beta^{2}+3 \alpha  \beta +3 \beta^{2}+\alpha +\beta \right) x y^{2}}\right)
+\cdots.
\end{multline}
Carlitz-Scoville~\cite[\S8]{CS74}  proved the recurrence 
\begin{equation}\label{rec-CS}
A(r,s\,|\,\alpha,\beta)=(s+\alpha)A(r-1,s\,|\,\alpha,\beta)+(r+\beta)A(r,s-1\,|\,\alpha,\beta)
\end{equation}
and  also provided the  combinatorial interpretation
\begin{equation}\label{Combinatorics C-S}
A(r,s\,|\,\alpha,\beta)=  \sum\limits_{\substack{\sigma\in\Sym_{r+s+1} \\ 
\mathrm{des}(\sigma)=r}}\alpha^{\lrmin(\sigma)-1}
   \beta^{\rlmin(\sigma)-1},
\end{equation}
where $\des(\sigma)$ is the number of descents of $\sigma$, while $\lrmin(\sigma)$ (resp. $\rlmin(\sigma)$) is the number of \emph{left-to-right}  (resp. 
\emph{right-to-left}) minima of $\sigma$, see \eqref{def:statistics}.

 Let $\lambda_1\geq \cdots\geq \lambda_{r+1}$ be real numbers and let $\lambda=(\lambda_1, \ldots, \lambda_{r+1})$. The permutahedron $\rm{Perm}(\lambda)$ is the convex hull of points $\lambda_\s=(\lambda_{\s(1)}, \ldots, \lambda_{\s(r+1)})$ where $\s$ is a permutation in $\Sym_{r+1}$. It is known that 
 \[
 \rm{vol}(\rm{Perm}(\lambda))=\sum_{c=(c_1, \ldots, c_r)}A_c\frac{\mu_1^{c_1}\cdots \mu_r^{c_r}}{c_1!\cdots c_r!}
 \]
 where $c\in \mathcal{W}_r=\{(c_1, \ldots, c_r): c_1+\cdots +c_r=r\}$ and $\mu_i=\lambda_i-\lambda_{i+1}$.  The coefficients $A_c$ are positive integers known as the \emph{mixed Eulerian numbers}~\cite{Po09}.

In this paper we adopt  the standard $q$-notations~\cite{An76}. For  $x\in \R$ the $q$-real number $[x]$ is defined by 
\[    
[x]=\frac{1-q^{x}}{1-q}.
\]
For an integer  $r\in\N$, we define
$[r]!=1\cdot (1+q)\cdots (1+q+\cdots +q^{r-1})$ and 
the $q$-binomial coefficient by
\[
\qbin{r}{k}=\frac{ [r]!}{[k]![r-k]!}\qquad \rm{for} \quad  r\geq k\geq 0.
\]
Nadeau and Tewari~\cite{NT21,NT23} introduced a $q$-analogue of the mixed Eulerian numbers $A_c(q)$, known as the \emph{remixed Eulerian numbers}. These numbers possess several remarkable properties, among which:
   \begin{enumerate}
       \item $A_c(q)$ is a polynomial with non-negative coefficients and $A_c(1)=A_c$,
       \item $A_{(c_1, \ldots, c_r)}(q)=q^{\binom{r}{2}}A_{(c_r, \ldots, c_1)}(q^{-1})$,
       \item $\sum_{c\in \mathcal{W}_r}A_c(q)=[r]! \cdot \frac{1}{r+1}\binom{2r}{r}$,
       \item $A_{(1, \ldots, 1)} (q)=[r]!$, and 
       \item $A_{(k,0, \ldots, 0, r-k)}(q)=q^{\binom{k}{2}}\cdot \qbin{r}{k}$.
   \end{enumerate}
   
   Recently, Gaudin~\cite{Ga24} observed that the Carlitz-Scoville numbers $A(r,s\,|\,\alpha,\beta)$ are a rescaled subfamily of the mixed Eulerian numbers and  proposed the   following $q$-analogue $A(r,s\,|\,\alpha,\beta)_q$ within the framework of the remixed Eulerian numbers,
\begin{equation}\label{def:solal}
    A(r,s\,|\,\alpha, \beta)_q=\frac{1}{[\alpha+\beta-1]!}A_{(0^r, 1^{\alpha-1}, r+s+1, 1^{\beta-1}, 0^s)}(q).
\end{equation}
  The following $q$-analogue of the recurrence~\eqref{rec-CS} is due to Gaudin~\cite[Proposition~4.3]{Ga24}:
\begin{equation}\label{rec-solal} 
A(r,s\,|\,\alpha,\beta)_q=q^{r+\beta-1}[s+\alpha]A(r-1,s\,|\,\alpha,\beta)_q+[r+\beta]A(r,s-1\,|\,\alpha,\beta)_q.
\end{equation}

For $\sigma=\sigma_1\sigma_2\ldots \sigma_n\in \Sym_n$, define  the following statistics
\begin{subequations}\label{def:statistics}
    \begin{align}  
    \des(\sigma)&=|\{i\in [n-1]\,|\, \sigma_i>\sigma_{i+1} \}|,\\
\lrmin(\sigma)&=|\{i\in [n]\,| \,\sigma_i<\sigma_{j},\; \forall j<i \}|,\\
\rlmin(\sigma)&=|\{i\in [n]\,|\, \sigma_i<\sigma_{j},\; \forall j>i \}|.
\end{align}
\end{subequations}
The statistic $\des$ is a well-known Eulerian statistic due to \eqref{eulerian polynomial}, both $\lrmin$ and 
$\rlmin$  are  Stirling statistics:
\begin{equation}
    \sum_{\sigma\in \Sym_n}x^{\lrmin(\sigma)}= \sum_{\sigma\in \Sym_n}x^{\rlmin(\sigma)}=\sum_{k=0}^ns(n,k)x^k,
\end{equation}
where $s(n,k)$ are the Stirling numbers of the first kind. The \emph{inversion number} $\inv$ and  \emph{major index} $\maj$ are defined by 
\begin{align}
    \inv(\sigma)&=|\{(i,j)\,|\, \sigma_i>\sigma_{j},\; i<j\}|,\\
\maj(\sigma)&=\sum_{\sigma_i>\sigma_{i+1}} i.
\end{align}
The statistics $\inv$ and $\maj$ are well-known Mahonian statistics:
\begin{equation}
    \sum_{\sigma\in \Sym_n}q^{\inv(\sigma)}= \sum_{\sigma\in \Sym_n}q^{\maj(\sigma)}=[n]!.
\end{equation}

For convenience, we introduce the following notation,  valid for $n\geq k\geq 0$:
 \begin{equation}\label{solal-CS}
    E_{n,k}(\alpha,\beta,q):=A(k,n-k\,|\,\alpha,\beta)_q
\end{equation}
 with  the convention $E_{n,k}(\alpha,\beta,q)=0$ if $k\notin \{0, \ldots, n\}$.  
 
 As $A_{(1^r)}(q) = [r]!$~(see \cite{NT23}), by \eqref{def:solal} we have $E_{0,0}(\alpha, \beta,q)=1$.
 In view of \eqref{solal-CS}, setting $r+s=n$ and $r=k$ in \eqref{rec-solal} we obtain
\begin{equation}\label{rec-a-butler}
        {E}_{n,k}(\alpha,\beta,q)=
        q^{\beta+k-1}[n-k+\alpha]{E}_{n-1,k-1}(\alpha,\beta,q)+[k+\beta]{E}_{n-1,k}(\alpha,\beta,q).
    \end{equation} 

 When $(\alpha,\beta)=(1,1)$ or $(0,1)$, several $q$-analogues of Eulerian numbers are already known~\cite{Ca54, St76, Raw81}. In the case $\alpha=0$, Butler~\cite{Bu04,Bu25} introduced a novel $q$-analogue of Eulerian numbers by combining the parameter $\beta$ with the major index. 
 
The aim of this paper is to provide a combinatorial interpretation of 
$E_{n,k}(\alpha, \beta, q)$ in terms of the permutation statistics such as
left-to-right minima, right-to-left minima, descents and (re)mixed major index, and  to derive  several explicit formulas for these numbers. Our results
 generalize and unify
 those of Carlitz-Scoville~\cite{CS74} and  Butler~\cite{Bu04}.
 
For $\sigma\in \Sym_n$ the \emph{mixed major index}  of $\sigma$, denoted by $\widetilde{\maj}$, is defined by
\begin{equation}\label{definitionE_n(a,b,q)-bis}
\widetilde{\maj}(\sigma)=(1+\mathrm{des}(\sigma)-\mathrm{lrmin}(\sigma))(\alpha-1)+{(n-\mathrm{rlmin}(\sigma))(\beta-1)+\mathrm{maj}(\sigma)}.
\end{equation}

\begin{theorem}\label{main theorem1}
    For integer $n\geq k\geq 0$, we have 
     \begin{equation}
  E_{n,k}(\alpha, \beta, q) = \sum\limits_{\sigma\in\Sym_{n+1,k}}[\alpha]^{\lrmin(\sigma)-1}
   [\beta]^{\rlmin(\sigma)-1}q^{\widetilde{\maj}(\sigma)}.
\end{equation}
where $\Sym_{n+1,k}$ denotes  the set of permutations in $\Sym_{n+1}$ with 
$k$ descents.
\end{theorem}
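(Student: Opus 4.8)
The plan is to prove the identity by induction on $n$, showing that the right-hand side
\[
R_{n,k}(\alpha,\beta,q):=\sum_{\sigma\in\Sym_{n+1,k}}[\alpha]^{\lrmin(\sigma)-1}[\beta]^{\rlmin(\sigma)-1}q^{\widetilde{\maj}(\sigma)}
\]
satisfies the same recurrence \eqref{rec-a-butler} and the same initial condition $R_{0,0}=1$ as $E_{n,k}(\alpha,\beta,q)$. The base case is immediate: $\Sym_{1,0}=\{1\}$, for which $\lrmin=\rlmin=1$, $\des=0$, $\maj=0$, hence every exponent vanishes and $R_{0,0}=1$. For the inductive step I would set up a bijection $\Sym_{n+1,k}\to\bigsqcup$ of two blocks according to where the letter $n+1$ is inserted into a permutation of $\Sym_{n,k-1}$ or $\Sym_{n,k}$: namely, given $\tau\in\Sym_n$, insert $n+1$ into one of the $n+1$ slots; inserting at the very end keeps the descent count and creates a new right-to-left minimum position only trivially, while inserting into an interior ascent slot or at the front affects $\des$ by at most one. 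Concretely, inserting $n+1$ immediately after position $j$ (with the convention $j=0$ meaning at the front) turns an ascent into a descent (raising $\des$ by one) unless it is placed at a descent bottom or at the very end.

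The heart of the matter is to track the four statistics under this single-letter insertion. Since $n+1$ is the largest letter: (i) it is never a left-to-right minimum unless it is in position $1$, and it is never a right-to-left minimum unless it is in the last position — so placing $n+1$ at the front increases $\lrmin$ by exactly $1$ and placing it at the end increases $\rlmin$ by exactly $1$, while any other placement leaves both unchanged; (ii) for $\maj$, inserting $n+1$ after position $j$ shifts the descent set of $\tau$ in a controlled way (positions $>j$ are each shifted up by one, and a new descent at position $j+1$ may appear). The key bookkeeping identity is that summing $q^{\maj}$ over the $k+1$ "non-descent-creating" insertion slots of a fixed $\tau\in\Sym_{n,k}$ produces the factor $[k+\beta]$ after the $\beta$-weight adjustment in $\widetilde{\maj}$ is taken into account (the term $(n-\rlmin)(\beta-1)$ changes by $(\beta-1)$ exactly when $n+1$ goes to the end), and summing over the $n-k+1$ "descent-creating" slots of a fixed $\tau\in\Sym_{n,k-1}$ produces $q^{\beta+k-1}[n-k+\alpha]$ after accounting for the $(\alpha-1)$-weight shift (the term $(1+\des-\lrmin)(\alpha-1)$ changes by $(\alpha-1)$ exactly when $n+1$ goes to the front, since then $\des$ is unchanged by a front-insertion before a descent top but $\lrmin$ rises). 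Carrying this out amounts to a careful slot-by-slot classification: for $\tau\in\Sym_{n,k-1}$, the descent-creating slots are the $n-k+1$ ascent-or-boundary positions, and the $q$-powers contributed, including the front slot with its extra $[\alpha]$, telescope to $q^{\beta+k-1}[n-k+\alpha]$; symmetrically for $\tau\in\Sym_{n,k}$.

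The main obstacle I anticipate is getting the exponent of $q$ exactly right in the two sums — in particular, pinning down precisely which slot of $\tau$ contributes which power of $q$ to $\maj$, and verifying that the $\widetilde{\maj}$-corrections from the $\lrmin/\rlmin$ jumps combine with $\maj$ to yield the clean prefactors $q^{\beta+k-1}$ and the $q$-integers $[n-k+\alpha]$, $[k+\beta]$. A clean way to organize this is to define, for a fixed $\tau$, the polynomial $\sum_{\text{slots }s}q^{\maj(\tau\cdot s)-\maj(\tau)}$ separately over ascent-slots and over descent-slots, and to show these equal $q$-shifted copies of $[n-k]$ and $[k]$ respectively (a standard lemma in the theory of $\maj$ and insertion); the parameters $\alpha,\beta$ then enter only through the boundary slots. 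Once the recurrence is matched, Theorem~\ref{main theorem1} follows by the uniqueness of the solution to \eqref{rec-a-butler} with the given initial condition.
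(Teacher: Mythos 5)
Your overall strategy is exactly the paper's: show the combinatorial sum satisfies the recurrence \eqref{rec-a-butler} with initial value $1$, by inserting the largest letter $n+1$ into a permutation of $\Sym_{n,k}$ or $\Sym_{n,k-1}$ and splitting the $n+1$ slots into the $k+1$ descent-preserving ones (the end slot and the descent slots) and the $n-k+1$ descent-creating ones (the front slot and the ascent slots). Your slot counts and your analysis of when $\lrmin$ and $\rlmin$ jump are correct. However, the two parenthetical claims about how the correction terms in $\widetilde{\maj}$ distribute over the slots — which you yourself identify as the crux — are both backwards, and carried out as stated they would not produce the required prefactors.

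First, the $\alpha$-correction. A front insertion of the maximum letter \emph{always} creates a descent at position $1$, so $\des$ and $\lrmin$ both increase by $1$ and $(1+\des-\lrmin)(\alpha-1)$ is \emph{unchanged} at the front slot; the increment by $\alpha-1$ occurs instead at each interior ascent slot, where $\des$ goes up but $\lrmin$ does not. This is precisely what makes the two contributions combine: the front slot yields $[\alpha]\,q^{\beta+k-1}$ (the $q$-power coming from $\maj$ increasing by $k$ and the $\beta$-term by $\beta-1$), the ascent slots yield $q^{\alpha+\beta+k-1}[n-k]$, and $q^{\beta+k-1}\bigl([\alpha]+q^{\alpha}[n-k]\bigr)=q^{\beta+k-1}[n-k+\alpha]$. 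With your assignment (the $\alpha-1$ attached to the front slot only) one gets $[\alpha]q^{\alpha+\beta+k-2}+q^{\beta+k}[n-k]$, which already fails for $\alpha=2$. Second, the $\beta$-correction: since the ambient length grows from $n$ to $n+1$, the term $(\mathrm{length}-\rlmin)(\beta-1)$ increases by $\beta-1$ for \emph{every} slot \emph{except} the end slot (where $\rlmin$ also rises by $1$ and the term is unchanged) — the opposite of your "exactly when $n+1$ goes to the end." Once these two assignments are reversed, your plan goes through and reproduces the paper's computation verbatim, including the identity $\sum q^{(k-1-l)+j}=q^{k}[n-k]$ over the ascent slots.
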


We define the \emph{bi-Stirling-Euler-Mahonian polynomials} $E_n(x\,|\,\alpha, \beta,q)$
by
\begin{equation} \label{rec-b-butler} 
E_n(x\,|\,\alpha, \beta,q)
=\sum_{k=0}^{n}E_{n,k}(\alpha,\beta,q)x^{k}.
\end{equation}
For example,  it follows from \eqref{rec-a-butler} that
\begin{align*}
E_0(x\,|\,\alpha, \beta,q)&=1,\\
E_1(x\,|\,\alpha, \beta,q)&= [\beta]+q^{\beta}[\alpha]x,\\
 E_2(x\,|\,\alpha, \beta,q)&= [\beta]^2+q^{\beta}([1+\alpha][\beta]+[\alpha][\beta+1])x+q^{2\beta+1}[\alpha]^2x^2.  
\end{align*}

 Define the $q$-derivative operator  $\delta_{x}$ by
\begin{equation}
   \delta_{x}\,p(x)=\frac{p(qx)-p(x)}{(q-1)x},
\end{equation}
where $p(x)\in \R[[x]]$.
 
  \begin{theorem} \label{main theorem2}
  Let $E_n(x;q):=E_n(x\,|\,\alpha, \beta, q) $ for $n\ge 0$. We have the following $q$-identities:
    \begin{itemize}
    \item[(i)] 
    \begin{align}\label{rec:XD}
        E_{n}(x;q)=
\bigl([\beta]+q^\beta[n-1+\alpha]x\bigr)E_{n-1}(x;q)+ (1-x)xq^{\beta}\delta_{x}(E_{n-1}(x;q)).
    \end{align}  

    \item[(ii)] 
    \begin{equation}\label{carlitz-E_n(x;q)}
     \frac{E_{n}(x;q)}{(x;q)_{n+\alpha+\beta}}=
\sum_{j\ge 0}x^j\qbin{j+\alpha+\beta-1}{j}[j+\beta]^n.
\end{equation}
    \item[(iii)] 
\begin{equation}\label{q-exp-GF}
    \sum_{n\ge 0}\frac{E_{n}(x;q)}{(x;q)_{n+\alpha+\beta}}\frac{t^n}{n!}
    =\sum_{j\ge 0}x^j\qbin{j+\alpha+\beta-1}{j}\exp([j+\beta]t).
\end{equation}
    \item[(iv)] 
\begin{equation}\label{worpitzky}
 \sum_{k=0}^{j}\qbin{j-k+n+\alpha+\beta-1}{j-k}E_{n,k}(\alpha,\beta,q)=  \qbin{j+\alpha+\beta-1}{j}[j+\beta]^n. 
\end{equation}  
    \item[(v)] \begin{equation}\label{explicit-formula-E}
    E_{n,k}(\alpha, \beta, q)=\sum_{j=0}^{k}\qbin{n+\alpha+\beta}{k-j}
    \qbin{j+\alpha+\beta-1}{j}(-1)^{k-j}
 q^{\binom{k-j}{2}}\,[\beta+j]^{n}.    
\end{equation}
\end{itemize}
\end{theorem}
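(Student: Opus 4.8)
The five identities of Theorem~\ref{main theorem2} cascade out of part~(i), which is merely a generating-function reformulation of the recurrence~\eqref{rec-a-butler}, together with the $q$-binomial theorem in its two standard forms.

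\emph{Part (i).} I would multiply \eqref{rec-a-butler} by $x^k$ and sum over $k$. For the summand $[k+\beta]E_{n-1,k}$, write $[k+\beta]=[\beta]+q^\beta[k]$ and use the elementary identity $x\,\delta_x(x^k)=[k]x^k$, which converts $\sum_k[k]E_{n-1,k}x^k$ into $x\,\delta_x E_{n-1}(x;q)$. For the summand $q^{\beta+k-1}[n-k+\alpha]E_{n-1,k-1}$, shift the index by one and apply the identity $q^{j}[m-j]=[m]-[j]$ (immediate from $[x]=(1-q^x)/(1-q)$) with $m=n-1+\alpha$; this produces $q^\beta[n-1+\alpha]x\,E_{n-1}(x;q)-q^\beta x^2\,\delta_x E_{n-1}(x;q)$. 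Adding the two contributions gives exactly \eqref{rec:XD}. This is routine; the only care needed is the bookkeeping with $q$-numbers.

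\emph{Key reduction.} Set $\hat E_n(x):=E_n(x;q)/(x;q)_{n+\alpha+\beta}$. The plan is to show that \eqref{rec:XD} is equivalent to the simpler recurrence $\hat E_n(x)=[\beta]\,\hat E_{n-1}(x)+q^\beta x\,\delta_x\hat E_{n-1}(x)$. Substitute $E_n=(x;q)_{n+\alpha+\beta}\hat E_n$ into \eqref{rec:XD}; the $q$-calculus inputs are the $q$-Leibniz rule $\delta_x(fg)(x)=f(qx)(\delta_x g)(x)+g(x)(\delta_x f)(x)$ and, with $m:=n-1+\alpha+\beta$, the relations $(x;q)_m(qx)=(1-q^m x)(x;q)_m/(1-x)$ and $\delta_x (x;q)_m=-[m](x;q)_m/(1-x)$. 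After dividing through by $(x;q)_m$ one obtains
\[
(1-q^m x)\,\hat E_n=\bigl([\beta]+q^\beta([n-1+\alpha]-[m])x\bigr)\hat E_{n-1}+q^\beta x(1-q^m x)\,\delta_x\hat E_{n-1},
\]
and since $[m]-[n-1+\alpha]=[n-1+\alpha+\beta]-[n-1+\alpha]=q^{n-1+\alpha}[\beta]$, the coefficient of $\hat E_{n-1}$ collapses to $[\beta](1-q^m x)$; cancelling the common factor $1-q^m x$ yields the claimed recurrence. The main obstacle of the whole proof lives precisely here: carrying out the $q$-Pochhammer differentiation and making the factor $1-q^m x$ cancel out cleanly.

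\emph{Part (ii).} Let $R_n(x):=\sum_{j\ge0}x^j\qbin{j+\alpha+\beta-1}{j}[j+\beta]^n$ denote the right-hand side. Using $\delta_x(x^j)=[j]x^{j-1}$ and $[j+\beta]=[\beta]+q^\beta[j]$ one checks immediately that $[\beta]R_{n-1}+q^\beta x\,\delta_x R_{n-1}=R_n$, so $R_n$ satisfies the same recurrence as $\hat E_n$. At $n=0$, the $q$-binomial theorem $\sum_{j\ge0}\qbin{N+j-1}{j}x^j=1/(x;q)_N$ with $N=\alpha+\beta$ gives $R_0=1/(x;q)_{\alpha+\beta}=\hat E_0$, using $E_0(x;q)=1$. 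By induction $\hat E_n=R_n$ for all $n\ge0$, which is \eqref{carlitz-E_n(x;q)}. Parts (iii)--(v) are then formal: for (iii) multiply \eqref{carlitz-E_n(x;q)} by $t^n/n!$ and sum over $n$, using $\sum_n([j+\beta]t)^n/n!=\exp([j+\beta]t)$; for (iv) expand $1/(x;q)_{n+\alpha+\beta}=\sum_{i\ge0}\qbin{i+n+\alpha+\beta-1}{i}x^i$, multiply by $E_n(x;q)=\sum_kE_{n,k}x^k$, and extract the coefficient of $x^j$ in \eqref{carlitz-E_n(x;q)}; for (v) instead write $E_n(x;q)=(x;q)_{n+\alpha+\beta}R_n(x)$, expand $(x;q)_{n+\alpha+\beta}$ by the terminating $q$-binomial theorem $(x;q)_N=\sum_{i=0}^N(-1)^iq^{\binom i2}\qbin Ni x^i$ with $N=n+\alpha+\beta$, and read off the coefficient of $x^k$ in the Cauchy product. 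None of (iii)--(v) requires anything beyond comparing coefficients.
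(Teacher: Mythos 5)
Your proposal is correct and follows essentially the same route as the paper: part (i) by multiplying the recurrence \eqref{rec-a-butler} by $x^k$ and summing with the splittings $[k+\beta]=[\beta]+q^\beta[k]$ and $q^k[n-1+\alpha-k]=[n-1+\alpha]-[k]$, and part (ii) by induction, showing that both sides of \eqref{carlitz-E_n(x;q)} satisfy the first-order recurrence $g_n=[\beta]g_{n-1}+q^\beta x\,\delta_x g_{n-1}$ (the paper packages this same operator as $x^{1-\beta}\delta_x(x^{\beta}\,\cdot\,)$ applied to the induction hypothesis), with (iii)--(v) following by the same formal coefficient extractions.
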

We shall prove Theorem~\ref{main theorem1} and Theorem~\ref{main theorem2} in Section~2 and derive some corollaries in Section~3. 
Barbero et al.~\cite{BSV15} introduced the $\nu$-order $(s,t)$-Eulerian numbers. 
In Section~4 we show that when $\nu=1$ their  numbers coincide with the Carlitz-Scoville numbers
$A(r,s\,|\,\alpha, \beta)$.

\begin{remark}
    Using the inversion statistic on permutations, Dong-Lin-Pan~\cite{DLP24} obtained a $q$-analogue of \eqref{CS-egf} via Gessel’s $q$-compositional formula. For recent developments on Carlitz–Scoville’s generalized Eulerian polynomials, we refer the reader to~\cite{XZ24}.
    \end{remark}
\section{Proof of main Theorems}

\begin{proof}[\textbf{Proof of Theorem~\ref{main theorem1}}]
 Consider the enumerative polynomials
\begin{equation}\label{combina-defE_n(a,b,q)}
    \widehat{E}_{n,k}(\alpha,\beta,q)=\sum_{\sigma\in \Sym_{n+1,k}}[\alpha]^{\mathrm{lrmin}(\sigma)-1}[\beta]^{\mathrm{rlmin}(\sigma)-1}q^{\widetilde{\maj}(\sigma)}.
\end{equation} 
    It suffices to verify that the polynomials 
    $\widehat{E}_{n,k}(\alpha,\beta,q)$ satisfy the recurrence~\eqref{rec-a-butler}.
Clearly Eq.~\eqref{rec-a-butler} is valid for $n=0$. 
    It is clear that deleting $n+1$ from a permutation in $\Sym_{n+1, k}$, we obtain a permutation in $\Sym_{n, k}\cup \Sym_{n, k-1}$.   Inversely, any permutation $\pi$ in $\Sym_{n+1,k}$ can be created  by adding $n+1$ to a permutation $w$ in $\Sym_{n, k}\cup \Sym_{n, k-1}$.  We will discuss four types of insertion positions: a) at the beginning  of $w$,  b) at the end of $w$, c) at descent positions of $w$,  d) at ascent positions of $w$.  
 \begin{enumerate}
 \item[(a)] If   $w=\s_1\ldots \s_{n}\in \Sym_{n, k-1}$, then
  $\sigma=(n+1)\s_1\ldots \s_n\in \Sym_{n+1, k}$ with 
  \[(\lrmin, \rlmin, \des)\pi=(1+\lrmin, \rlmin, 1+\des)w.\] Hence
    \[
    \sum_{\substack{\pi\in \Sym_{n+1, k}\\ \pi(1)=n+1}}[\alpha]^{\mathrm{lrmin}(\pi)-1}[\beta]^{\mathrm{rlmin}(\pi)-1}q^{\widetilde{\maj}(\pi)}=[\alpha]q^{\beta+k-1}  \widehat{E}_{n-1,k-1}(\alpha,\beta,q).
    \]
    \item[(b)]
    If  $w=\s_1\ldots \s_{n}\in \Sym_{n, k}$, 
    then $\pi=\s_1\ldots \s_{n}(n+1)\in \Sym_{n+1, k}$
    and $(\lrmin, \rlmin)\pi=(\lrmin, 1+\rlmin)w$. Hence
    \[
    \sum_{\substack{\pi\in \Sym_{n+1, k}\\ \pi(n)=n+1}}[\alpha]^{\mathrm{lrmin}(\pi)-1}[\beta]^{\mathrm{rlmin}(\pi)-1}q^{\widetilde{\maj}(\pi)}=[\beta]  \widehat{E}_{n-1,k}(\alpha,\beta,q).
    \]
    
    \item[(c)] 
     If $w=\s_1\ldots \s_{n}\in \Sym_{n, k}$ with the descent set $\{i_1,i_2,\dots,i_{k}\}$,
     then   we define $\pi=\s_1\ldots \s_{i_j} (n+1) \s_{i_j+1}\ldots \s_{n}\in \Sym_{n+1,k}$ with $1\leq j\leq k$. It is clear that $(\lrmin, \rlmin)\pi=(\lrmin, \rlmin)w$ and
  $\widetilde{\maj}(\pi)=\widetilde{\maj}(w)+k-j$.
     Thus
    \[
    \sum_{\pi\in \Sym'_{n+1, k}}[\alpha]^{\mathrm{lrmin}(\pi)-1}[\beta]^{\mathrm{rlmin}(\pi)-1}q^{\widetilde{\maj}(\pi)}
    =
        q^{\beta}[k] \widehat{E}_{n-1,k}(\alpha,\beta,q),
    \]
    where  $\Sym'_{n+1, k}$ is the set of  permutations $\pi=\s_1\ldots \s_i (n+1) \s_{i+1}\ldots \s_{n}$ in $\Sym_{n+1, k}$ such that $\s_i>\s_{i+1}$.
  
    \item[(d)] 
     Let  $w=\s_1\ldots \s_{n}\in \Sym_{n, k-1}$
     with descent set $\{i_1, \ldots, i_{k-1}\}$.  If $\pi$ is a permutation 
     $\pi=\s_1\ldots \s_{j} (n+1) \s_{j+1}\ldots \s_{n}\in \Sym_{n+1, k}$  with $i_l<j<i_{l+1}$, where
     $0\leq l \leq k-1$, $i_0=0$ and $i_k=n$,  then 
     \begin{gather}
              (\lrmin, \rlmin, \des)\pi=(\lrmin, \rlmin, 1+\des)w,\\
       \widetilde{\maj}(\pi)
       =(k-1-l)+(j+1)+(\alpha-1)+(\beta-1)+\widetilde{\maj}(w).
        \end{gather}
Since
     \begin{align*}
     \bigcup_{l=0}^{k-1}\{k-1-l+j: i_l<j<i_{l+1}\}
     &=\bigcup_{l=0}^{k-1}\{k-l+i_l, \ldots, k-l+i_{l+1}-2\}\\
     &=\{k, k+1, \ldots, n-1\},
         \end{align*}
 we have 
      \[
\sum_{\substack{i_l<j<i_{l+1}\\ 0\leq l\leq k-1}}
     q^{\alpha+\beta-1+k-1-l+j}=q^{\alpha+\beta+k-1}[n-k].
      \]
It follows that 
     \begin{align*}
      \sum_{\pi\in \Sym^{''}_{n+1, k}} [\alpha]^{\lrmin(\pi)-1}
     [\beta]^{\rlmin(\pi)-1}
     q^{\widetilde{\maj}(\pi)}
     &=q^{\alpha+\beta+k-1}[n-k]\cdot 
     \widehat{E}_{n-1,k-1}(\alpha,\beta,q),
     \end{align*}
      where  $\Sym''_{n+1, k}$ is the  set of  permutations $\sigma=\sigma_1\ldots \sigma_i (n+1) \sigma_{i+1}\ldots \sigma_{n}$ in $\Sym_{n+1, k}$  such that $\sigma_i<\sigma_{i+1}$.
         \end{enumerate}
         Combining the two cases (a) and (d) (resp. (b) and (c))  we obtain 
     \begin{equation}\label{rec-a-butler-bis}
        \widehat{E}_{n,k}(\alpha,\beta,q)=
        q^{\beta+k-1}[n-k+\alpha]\widehat{E}_{n-1,k-1}(\alpha,\beta,q)+[k+\beta]\widehat{E}_{n-1,k}(\alpha,\beta,q).
    \end{equation} 
     Comparing the above equation with \eqref{rec-a-butler} we obtain  $\widehat{E}_{n,k}(\alpha,\beta,q)=E_{n,k}(\alpha,\beta,q)$.
     \end{proof}

For non-negative integer $n\in \Z$,
we define  $(x;q)_0=1$ and
\[
(x;q)_n=(1-x)(1-xq)\cdots(1-xq^{n-1})\qquad (n\geq 1).
\]
The $q$-binomial coefficients $\qbin{x}{n}$ are defined by
\[
\qbin{x}{n}=\frac{(q^{x-n+1};q)_n}{(q;q)_n}.
\]
Furthermore we  define
$(x;q)_\infty=\lim_{n\to \infty}(x;q)_n$
  and $(x;q)_n$ for all real numbers $n$ by
 \begin{equation}
 (x;q)_n=(x;q)_\infty/(xq^n;q)_\infty.
 \end{equation}
 Recall \cite[Chapter~2]{An76} that 
\begin{equation}
\sum_{n=0}^\infty\frac{(x;q)_nt^n}{(q;q)_n}=\frac{(xt;q)_\infty}{(t;q)_\infty}.
 \end{equation}
In particular we have 
\begin{align}\label{q-binomial-I}
(x;q)_N&=\sum_{i=0}^\infty \qbin{N}{i}(-1)^i x^i q^{{i(i-1)/2}},\\
\frac{1}{(x;q)_{N}}&=\sum_{j=0}^\infty x^{j}\qbin{N+j-1}{j}.\label{q-binomial-II}
\end{align}
For $k,n\in \N$ an easy calculation yields
\begin{equation}
    \delta_{x}\,\frac{x^k}{(x;q)_{n}}=x^{k-1}\frac{[k]+x[n-k]q^k}{(x;q)_{n+1}}=
    \begin{cases}
        [k]x^{k-1}, & \textrm{if} \;n=0;\\
        \frac{[n]}{(x;q)_{n+1}}, & \textrm{if}  \;k=0.
    \end{cases}
\end{equation}

\begin{proof}[\textbf{Proof of Theorem~\ref{main theorem2}}](i) 
     Multiplying Eq.~\eqref{rec-a-butler} or \eqref{rec-a-butler-bis} by $x^k$ and summing $k$ from $0$ to $n$ we obtain
     \begin{align}\label{eq:rec}
         \sum_{k= 0}^{n}E_{n,k}(\alpha,&\beta,q)x^k
         =\sum_{k= 0}^{n}\{q^{\beta+k-1}[n-k+\alpha]E_{n-1,k-1}(\alpha,\beta,q)+[k+\beta]E_{n-1,k}(\alpha,\beta,q)\}x^k\nonumber\\
         &=\sum_{k=0}^{n-1}q^{\beta+k}[n-1+\alpha-k]E_{n-1,k}(\alpha,\beta,q)x^{k+1}+\sum_{k=0}^{n-1}[k+\beta]E_{n-1,k}(\alpha,\beta,q)x^k.
     \end{align}
    Substituting $ [k+\beta]=[\beta]+q^{\beta}[k]$
     and  $ q^{\beta+k}[n-1+\alpha-k]=q^\beta[n-1+\alpha]-q^\beta[k]$ in \eqref{eq:rec} and then using $\delta_xx^{k}=[k]x^{k-1}$, we have
     \begin{align}\label{eq3}
         E_n(x;q)=&q^\beta [n-1+\alpha]xE_{n-1}(x;q)-q^\beta x^2\delta_x(E_{n-1}(x;q))\nonumber\\
         &+[\beta]E_{n-1}(x;q)+q^\beta x \delta_x(E_{n-1}(x;q)),
     \end{align}
     which is clearly equal to Eq.~\eqref{rec:XD}.
     
      (ii) We proceed by induction on $n\geq 0$. When $n=0$ Eq.~\eqref{carlitz-E_n(x;q)}  is 
  \eqref{q-binomial-II}  with $N=\alpha+\beta$.
  Multiplying \eqref{carlitz-E_n(x;q)} by $x^\beta$ and shifting $n\to n-1$
yields
\begin{align}\label{q-exp-equivalent-E_n(x;q)-n-1}
 \frac{x^{\beta} E_{n-1}(x;q)}{(x;q)_{n+\alpha+\beta-1}}=
 \sum_{j\ge 0}x^{j+\beta}\qbin{j+\alpha+\beta-1}{j}[j+\beta]^{n-1}.
\end{align}
 Applying   the operator $x^{1-\beta}\delta_x$ to 
 \eqref{q-exp-equivalent-E_n(x;q)-n-1} 
 and using  the $q$-Leibniz formula
\begin{equation}\label{product-rule-D_q}
    \delta_{x}\,(f(x)g(x))=\delta_{x}(f(x))g(qx)+f(x)\delta_{x}(g(x)),
\end{equation}
we obtain
\begin{gather}
\frac{(1-x)\big(q^{\beta}x\delta_{x}(E_n(x;q))+[\beta]E_n(x;q)\big)+[n+\alpha+\beta-1]xE_{n}(x;q)}{(x;q)_{n+\alpha+\beta}}\nonumber\\
  =\sum_{j\ge 0}x^{j}\qbin{j+\alpha+\beta-1}{j}[j+\beta]^{n}. 
\end{gather}
 Combining the above identity with \eqref{rec:XD}, we have~\eqref{carlitz-E_n(x;q)}.

 (iii) Multiplying Eq.~\eqref{carlitz-E_n(x;q)} by $t^n/n!$ followed by summing over $n\ge 0$, we have
 \begin{align}
     \sum_{n\ge 0}\frac{E_{n}(x;q)}{(x;q)_{n+\alpha+\beta}}\frac{t^n}{n!}
    &=\sum_{j\ge 0}x^j\qbin{j+\alpha+\beta-1}{j}\sum_{n\ge 0}\frac{([j+\beta]t)^n}{n!}\nonumber\\
    &=\sum_{j\ge 0}x^j\qbin{j+\alpha+\beta-1}{j}\exp([j+\beta]t).
 \end{align}

 (iv)  Eq.~\eqref{worpitzky} follows by extracting the coefficient of $x^j$ in both sides of Eq.~\eqref{carlitz-E_n(x;q)} and applying Eq.~\eqref{q-binomial-II}.

 (v) Eq.~\eqref{explicit-formula-E} follows from multiplying both sides of Eq.~\eqref{carlitz-E_n(x;q)} by $(x;q)_{n+\alpha+\beta}$ and extracting the coefficient of $x^k$ in both sides by using Eq.~\eqref{q-binomial-I}.
 \end{proof}

\section{Corollaries of main theorems}

In this section, we illustrate applications of our main theorem by specializing its parameters.

 \begin{coro}{\cite[Corollary~4.2.4]{Bu04-Phd}} For $n\ge 1$, we have 
\begin{equation}\label{beta-q-Carlitz}
     \frac{\sum_{\sigma\in \Sym_{n}}x^{\des(\sigma)}
q^{\widetilde{\maj_1}(\sigma)}[\beta]^{\rlmin(\sigma)}}{(x;q)_{n+\beta}} 
     =\sum_{j\geq 0}x^{j}\qbin{j+\beta-1}{j}
    [j+\beta]^{n},
\end{equation}
where $\widetilde{\maj_1}(\sigma)=(n-\rlmin(\sigma))(\beta-1)+\maj(\sigma)$.
\end{coro}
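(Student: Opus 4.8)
The plan is to obtain \eqref{beta-q-Carlitz} as the $\alpha=0$ specialization of Theorem~\ref{main theorem2}(ii), after identifying the numerator $E_n(x\,|\,0,\beta,q)$ combinatorially via Theorem~\ref{main theorem1}. The key observation is that $[0]=0$ while $0^{0}=1$, so in $E_{n,k}(0,\beta,q)=\sum_{\sigma\in\Sym_{n+1,k}}[0]^{\lrmin(\sigma)-1}[\beta]^{\rlmin(\sigma)-1}q^{\widetilde{\maj}(\sigma)}$ every term with $\lrmin(\sigma)\geq 2$ vanishes, leaving only the permutations $\sigma\in\Sym_{n+1}$ with $\lrmin(\sigma)=1$, that is, those with $\sigma_1=1$.

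Next I would invoke the standard bijection $\sigma=1\,\sigma_2\cdots\sigma_{n+1}\mapsto\tau=(\sigma_2-1)\cdots(\sigma_{n+1}-1)$ between $\{\sigma\in\Sym_{n+1}:\sigma_1=1\}$ and $\Sym_n$, and record how the relevant statistics transform. Since $\sigma_1=1<\sigma_2$, position~$1$ is never a descent of $\sigma$, so $\des(\sigma)=\des(\tau)$; a descent at position $j$ of $\tau$ corresponds to a descent at position $j+1$ of $\sigma$, whence $\maj(\sigma)=\maj(\tau)+\des(\tau)$; the entry $1$ at the front is an additional right-to-left minimum, so $\rlmin(\sigma)=\rlmin(\tau)+1$; and $\lrmin(\sigma)=1$. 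Feeding these into \eqref{definitionE_n(a,b,q)-bis} evaluated at $\alpha=0$ (with $n+1$ in place of $n$, since $\sigma\in\Sym_{n+1}$), the $(\alpha-1)$-term becomes $-\des(\sigma)=-\des(\tau)$ and cancels the $\des(\tau)$ arising inside $\maj(\sigma)$, while the $\rlmin$-shift absorbs the additive constant, yielding $\widetilde{\maj}(\sigma)\big|_{\alpha=0}=(n-\rlmin(\tau))(\beta-1)+\maj(\tau)=\widetilde{\maj_1}(\tau)$ and $[\beta]^{\rlmin(\sigma)-1}=[\beta]^{\rlmin(\tau)}$.

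Multiplying by $x^k$ and summing over $k$ (using $\des(\sigma)=\des(\tau)$) then gives
\[
E_n(x\,|\,0,\beta,q)=\sum_{\tau\in\Sym_n}x^{\des(\tau)}q^{\widetilde{\maj_1}(\tau)}[\beta]^{\rlmin(\tau)},
\]
which is exactly the numerator on the left of \eqref{beta-q-Carlitz}; since $(x;q)_{n+\alpha+\beta}$ becomes $(x;q)_{n+\beta}$ at $\alpha=0$, Theorem~\ref{main theorem2}(ii) specialized to $\alpha=0$ is precisely the stated identity. The main obstacle is the bookkeeping in the middle paragraph: one must check that the $(\alpha-1)$-contribution of $\widetilde{\maj}$ drops out at $\alpha=0$ \emph{only} because $\lrmin(\sigma)=1$ on the support, and that the correct index in the $\beta$-term of \eqref{definitionE_n(a,b,q)-bis} is $n+1$ rather than $n$; once those offsets are tracked correctly, the equality $\widetilde{\maj}(\sigma)\big|_{\alpha=0}=\widetilde{\maj_1}(\tau)$, and hence the corollary, follows immediately.
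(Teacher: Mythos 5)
Your proposal is correct and follows essentially the same route as the paper: specialize $\alpha=0$ so that only permutations $\sigma\in\Sym_{n+1}$ with $\lrmin(\sigma)=1$ (i.e.\ $\sigma_1=1$) survive, apply the shift bijection $\sigma\mapsto\sigma'$ with $\sigma'_i=\sigma_{i+1}-1$ to track $\des$, $\maj$, $\rlmin$, and then invoke Theorem~\ref{main theorem2}(ii). Your bookkeeping of how the $(\alpha-1)$-term of $\widetilde{\maj}$ cancels against the $\des(\tau)$ shift in $\maj$ is accurate and matches the paper's computation.
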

 \begin{proof} 
    When $\alpha=0$, the summation for $E_{n}(x\,|\, \alpha, \beta, q)$ reduces to the permutations $\sigma\in\Sym_{n+1}$ such that $\lrmin(\sigma)=1$, viz,  $\sigma_1=1$.
    Define  $\sigma'_i=\sigma_{i+1}-1$ for $i\in [n]$, 
    then  $\sigma'\in\Sym_n$  and 
    \begin{itemize}
    \item $\des(\sigma)=\des(\sigma')$;
        \item $\maj(\sigma)=\maj(\sigma')+\des(\sigma')$;
        \item $\rlmin(\sigma)=\rlmin(\sigma')+1$.
    \end{itemize}
    Hence, 
    \begin{align}\label{alpha=0}
        \sum_{\myatop{\sigma\in\Sym_{n+1}}{\lrmin(\sigma)=1}}x^{\des(\sigma)}q^{\widetilde{\maj}(\sigma)}[\beta]^{\rlmin(\sigma)-1}=\sum_{\sigma\in\Sym_{n}}x^{\des(\sigma)}q^{\widetilde{\maj_1}(\sigma)}[\beta]^{\rlmin(\sigma)}.
    \end{align}
   The result follows by combining~\eqref{carlitz-E_n(x;q)} and~\eqref{alpha=0}.
\end{proof}
\begin{coro}For $n\ge 1$, we have 
\begin{equation}\label{alpha-q-Carlitz}
     \frac{\sum_{\sigma\in \Sym_{n}}x^{\des(\sigma)}
q^{\widetilde{\maj_2}(\sigma)}[\alpha]^{\lrmin(\sigma)}}{(x;q)_{n+\alpha}} 
     =\sum_{j\geq 0}x^{j}\qbin{j+\alpha}{j+1}
    [j+1]^{n},
\end{equation}
where $\widetilde{\maj_2}(\sigma)=(1+\des(\sigma)-\lrmin(\sigma))(\alpha-1)+\maj(\sigma).$
\end{coro}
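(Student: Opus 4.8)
The plan is to specialize Theorem~\ref{main theorem2}(ii) by setting $\beta=1$, in the same spirit as the proof of the previous corollary (which took $\alpha=0$). With $\beta=1$ we have $[\beta]=[1]=1$ and $[j+\beta]=[j+1]$, and the right-hand side of \eqref{carlitz-E_n(x;q)} becomes $\sum_{j\ge 0}x^j\qbin{j+\alpha}{j}[j+1]^n$. Since $\qbin{j+\alpha}{j}=\qbin{j+\alpha}{\alpha}$ and $\qbin{j+\alpha}{j+1}=\qbin{j+\alpha}{\alpha-1}$, these differ by a factor $[\alpha]/[j+1]$; so to match \eqref{alpha-q-Carlitz} I will either absorb that factor or, more cleanly, shift the index appropriately and keep track of where the extra $[\alpha]$ and $[j+1]$ go. Concretely, $\qbin{j+\alpha}{j}=\frac{[\alpha]}{[j+1]}\qbin{j+\alpha}{j+1}$, so $\sum_j x^j\qbin{j+\alpha}{j}[j+1]^n=[\alpha]\sum_j x^j\qbin{j+\alpha}{j+1}[j+1]^{n-1}$ — this is not quite the target either, so I expect the cleanest route is a direct bijective reduction of the left-hand side rather than algebraic manipulation of the right-hand side.

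Thus the core step mirrors the $\alpha=0$ argument but on the other end of the permutation. When $\beta=1$, the weight $[\beta]^{\rlmin(\sigma)-1}=1$ is trivial, and (looking at the recurrence \eqref{rec-a-butler} with $\beta=1$, or equivalently the structure of $\widetilde{\maj}$) the sum for $E_n(x\,|\,\alpha,1,q)$ over $\Sym_{n+1}$ should reduce, after removing the symbol $n+1$ appropriately, to a sum over $\Sym_n$. More precisely I would set $\beta=1$ in Theorem~\ref{main theorem1}, so $\widetilde{\maj}(\sigma)=(1+\des(\sigma)-\lrmin(\sigma))(\alpha-1)+\maj(\sigma)$, which is exactly $\widetilde{\maj_2}$ evaluated on $\sigma\in\Sym_{n+1}$. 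I then need a bijection $\Sym_{n+1}\to\Sym_n$ (removing, say, the last letter or the value $n+1$) that tracks $(\des,\maj,\lrmin)$ with the appropriate shifts, analogous to the three bullet points in the previous proof, and that converts the $[\alpha]^{\lrmin(\sigma)-1}$ weight on $\Sym_{n+1}$ into $[\alpha]^{\lrmin(\sigma')}$ on $\Sym_n$ while introducing the denominator change from $(x;q)_{n+\alpha+1}$ to $(x;q)_{n+\alpha}$.

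The natural candidate is to note that $\qbin{j+\alpha+\beta-1}{j}=\qbin{j+\alpha}{j}$ when $\beta=1$, and to isolate from \eqref{carlitz-E_n(x;q)} at $\beta=1$ the identity $\frac{E_n(x\,|\,\alpha,1,q)}{(x;q)_{n+\alpha+1}}=\sum_{j\ge0}x^j\qbin{j+\alpha}{j}[j+1]^n$, then re-index via $\qbin{j+\alpha}{j}=\qbin{(j+1)+\alpha-1}{j+1}\cdot\frac{[\alpha]}{[j+1]}$ is awkward, so instead I would argue directly on the permutation side: every $\sigma\in\Sym_{n+1}$ with $\beta=1$ either ends in $n+1$ or not, and combining the ``end'' case contributes the $(x;q)$-denominator shift. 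The main obstacle will be getting the bijection bookkeeping exactly right — in particular verifying that the factor $(1-xq^{n+\alpha})$ (the difference between $(x;q)_{n+\alpha+1}$ and $(x;q)_{n+\alpha}$) is precisely accounted for by the insertion positions of $n+1$, and that the exponent shift in $\widetilde{\maj_2}$ (which has $n+1$ in place of $n$, but no explicit $n$-dependence — it only involves $\des$, $\lrmin$, $\maj$) behaves correctly under deletion. Once the $\beta=1$ specialization of Theorem~\ref{main theorem1} is in hand, I expect the reduction $\Sym_{n+1}\rightsquigarrow\Sym_n$ to be routine and entirely parallel to the previous corollary's proof, so I would present it in the same three-bullet style: list how $\des$, $\maj$, and $\lrmin$ transform, then conclude by combining with \eqref{carlitz-E_n(x;q)}.
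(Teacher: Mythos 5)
Your proposal does not reach a proof: it oscillates between a $\beta=1$ algebraic route and a ``bijective reduction,'' completes neither, and both contain errors as written. On the algebraic side you inverted the $q$-binomial identity: the correct relation is $[j+1]\qbin{j+\alpha}{j+1}=[\alpha]\qbin{j+\alpha}{j}$, hence
\[
\sum_{j\ge 0}x^j\qbin{j+\alpha}{j+1}[j+1]^{n}=[\alpha]\sum_{j\ge 0}x^j\qbin{j+\alpha}{j}[j+1]^{n-1},
\]
not the equation you wrote. Had you used this version you would have recognized the right-hand side of \eqref{alpha-q-Carlitz} as $[\alpha]\,E_{n-1}(x\,|\,\alpha,1,q)/(x;q)_{n+\alpha}$ by \eqref{carlitz-E_n(x;q)} applied at level $n-1$ with $\beta=1$; and since Theorem~\ref{main theorem1} with $\beta=1$ gives $[\beta]^{\rlmin(\sigma)-1}=1$ and $\widetilde{\maj}=\widetilde{\maj_2}$ on $\Sym_n$, the numerator on the left of \eqref{alpha-q-Carlitz} is exactly $[\alpha]\,E_{n-1}(x\,|\,\alpha,1,q)$, so the corollary would follow with no bijection at all. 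The $\beta=1$ idea is therefore salvageable, but the two steps that make it close (shift $n\to n-1$ and absorb the factor $[\alpha]$) are precisely the ones you talked yourself out of.

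The fallback you propose instead, a ``bijection $\Sym_{n+1}\to\Sym_n$'' tracking $(\des,\maj,\lrmin)$, cannot exist for cardinality reasons; with $\beta=1$ no permutation of $[n+1]$ is weighted out, so there is no distinguished subclass to delete from, and the bookkeeping for the factor $1-xq^{n+\alpha}$ is asserted to be routine but never carried out. The paper's proof takes the opposite specialization, $\beta=0$: then $[\beta]^{\rlmin(\sigma)-1}$ annihilates every $\sigma\in\Sym_{n+1}$ except those with $\rlmin(\sigma)=1$, i.e.\ $\sigma_{n+1}=1$, and deleting the last letter (subtracting $1$ from the rest) \emph{is} a genuine bijection onto $\Sym_n$ under which $\des$, $\maj$, $\lrmin$ drop by $1$, $n$, $1$ respectively; combining with \eqref{carlitz-E_n(x;q)} at $\beta=0$ and reindexing $j\to j+1$ (the $j=0$ term vanishes because $[0]=0$) yields \eqref{alpha-q-Carlitz}. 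Either the corrected $\beta=1$ algebra or the paper's $\beta=0$ bijection works; your write-up as it stands establishes neither.
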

\begin{proof}
    When $\beta=0$, the summation for $E_{n}(x\,|\, \alpha, \beta, q)$ reduces to the permutations $\sigma\in\Sym_{n+1}$ with $\sigma_{n+1}=1$.
    Let  $\sigma'_i=\sigma_i-1$ for $i\in [n]$, 
    then $\sigma'\in\Sym_n$ and
    \begin{itemize}
    \item $\des(\sigma)=\des(\sigma')+1$;
        \item $\maj(\sigma)=\maj(\sigma')+n$;
        \item $\lrmin(\sigma)=\lrmin(\sigma')+1$.
    \end{itemize}
    Hence, 
    \begin{align}\label{beta=0}
        \sum_{\substack{\sigma\in\Sym_{n+1}\\\rlmin(\sigma)=1}}
        x^{\des(\sigma)}q^{\widetilde{\maj}(\sigma)}[\alpha]^{\lrmin(\sigma)-1}=\sum_{\sigma\in\Sym_{n}}x^{1+\des(\sigma)}q^{\widetilde{\maj_2}(\sigma)}[\alpha]^{\lrmin(\sigma)}.
    \end{align}
    The result follows by combining~\eqref{carlitz-E_n(x;q)} and~\eqref{beta=0}.
\end{proof}
Setting  $\alpha=1$ in \eqref{alpha-q-Carlitz} or $\beta=1$ in ~\eqref{beta-q-Carlitz}
we recover  Carlitz's identity~\cite{Ca54},
\begin{equation}    \frac{\sum\limits_{\sigma\in\Sym_n}x^{\des(\sigma)}q^{\maj(\sigma)}}{(x;q)_{n+1}}=\sum_{j\ge 0}x^j[j+1]^n.
    \end{equation} 
\begin{coro} For $n\geq 1$ we have 
\begin{equation}\label{sum of eulerian numbers} 
    E_{n}(1; q) =\prod_{i=0}^{n-1}[\alpha+\beta+i].
\end{equation}
\end{coro}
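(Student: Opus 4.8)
The plan is to prove the closed product formula $E_n(1;q)=\prod_{i=0}^{n-1}[\alpha+\beta+i]$ by exhibiting it as the $x\to 1$ limit of the generating identity \eqref{carlitz-E_n(x;q)}, which is the most direct route since all the analytic machinery is already in place. First I would rewrite \eqref{carlitz-E_n(x;q)} as
\[
E_n(x;q)=(x;q)_{n+\alpha+\beta}\sum_{j\ge 0}x^j\qbin{j+\alpha+\beta-1}{j}[j+\beta]^n,
\]
and observe that the factor $(x;q)_{n+\alpha+\beta}$ has a zero at $x=1$ of order one (coming from the leading factor $(1-x)$), while the infinite sum $\sum_{j\ge 0}x^j\qbin{j+\alpha+\beta-1}{j}[j+\beta]^n$ has a simple pole at $x=1$; the product is therefore finite and must be evaluated carefully.

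The cleanest way to extract the limit is to isolate the singular behaviour: write $(x;q)_{n+\alpha+\beta}=(1-x)\,(xq;q)_{n+\alpha+\beta-1}$, so that at $x=1$ the second factor becomes $(q;q)_{n+\alpha+\beta-1}$, which in ordinary $q$-notation equals $(1-q)^{n+\alpha+\beta-1}[n+\alpha+\beta-1]!$. Then it remains to show
\[
\lim_{x\to 1}(1-x)\sum_{j\ge 0}x^j\qbin{j+\alpha+\beta-1}{j}[j+\beta]^n
=\frac{\prod_{i=0}^{n-1}[\alpha+\beta+i]}{(q;q)_{n+\alpha+\beta-1}}
=\frac{\prod_{i=0}^{n-1}[\alpha+\beta+i]}{(1-q)^{n+\alpha+\beta-1}[n+\alpha+\beta-1]!}.
\]
Alternatively, and perhaps more transparently, I would use the $q$-Worpitzky-type identity \eqref{worpitzky}: summing it is awkward, so instead I prefer to combine Theorem~\ref{main theorem1} with the recurrence \eqref{rec-a-butler}. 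Setting $x=1$ in \eqref{rec-b-butler} and using \eqref{rec-a-butler-bis} gives
\[
E_n(1;q)=\sum_{k=0}^{n}\bigl(q^{\beta+k-1}[n-k+\alpha]+[k+\beta]\bigr)E_{n-1,k}(\alpha,\beta,q),
\]
where in the first term the index has been shifted. Now the key algebraic identity is
\[
q^{\beta+k-1}[n-k+\alpha]+[k+\beta]=[n+\alpha+\beta-1],
\]
which one checks directly from $[a]+q^a[b]=[a+b]$ with $a=k+\beta$ and $b=n-k+\alpha-1$ (indeed $q^{\beta+k-1}[n-k+\alpha]=q^{k+\beta}[n-k+\alpha-1]+q^{\beta+k-1}$, and a short rearrangement collapses the sum). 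Since $[n+\alpha+\beta-1]$ does not depend on $k$, it factors out, leaving $E_n(1;q)=[n+\alpha+\beta-1]\sum_{k=0}^{n-1}E_{n-1,k}(\alpha,\beta,q)=[n+\alpha+\beta-1]E_{n-1}(1;q)$, and induction on $n$ (with base case $E_0(1;q)=E_{0,0}=1$) finishes the proof.

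The main obstacle is really just the bookkeeping in the index shift and the verification of the $q$-identity $q^{\beta+k-1}[n-k+\alpha]+[k+\beta]=[n+\alpha+\beta-1]$; once that is in hand the telescoping is immediate. I would present the recurrence-based argument as the primary proof because it avoids limits of divergent-looking series, and only mention the generating-function route as a remark. One should double-check the edge terms $k=0$ and $k=n$ in the recurrence, where $E_{n-1,-1}=E_{n-1,n}=0$ by convention, to be sure the shifted sum genuinely ranges over $k=0,\dots,n-1$.

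\begin{proof}
We argue by induction on $n$, the case $n=0$ being $E_0(1;q)=E_{0,0}(\alpha,\beta,q)=1$ (an empty product). For $n\ge 1$, set $x=1$ in \eqref{rec-b-butler} and apply the recurrence \eqref{rec-a-butler-bis}:
\begin{align*}
E_n(1;q)&=\sum_{k=0}^{n}E_{n,k}(\alpha,\beta,q)\\
&=\sum_{k=0}^{n}\Bigl(q^{\beta+k-1}[n-k+\alpha]\,E_{n-1,k-1}(\alpha,\beta,q)+[k+\beta]\,E_{n-1,k}(\alpha,\beta,q)\Bigr)\\
&=\sum_{k=0}^{n-1}q^{\beta+k}[n-1-k+\alpha]\,E_{n-1,k}(\alpha,\beta,q)+\sum_{k=0}^{n-1}[k+\beta]\,E_{n-1,k}(\alpha,\beta,q),
\end{align*}
where in the first sum we replaced $k$ by $k+1$ and used $E_{n-1,n}(\alpha,\beta,q)=0$. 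Using the identity $[a]+q^a[b]=[a+b]$ with $a=k+\beta$ and $b=n-1-k+\alpha$ we get
\[
[k+\beta]+q^{\beta+k}[n-1-k+\alpha]=[k+\beta+n-1-k+\alpha]=[n+\alpha+\beta-1],
\]
which is independent of $k$. Therefore
\[
E_n(1;q)=[n+\alpha+\beta-1]\sum_{k=0}^{n-1}E_{n-1,k}(\alpha,\beta,q)=[n+\alpha+\beta-1]\,E_{n-1}(1;q).
\]
By the induction hypothesis $E_{n-1}(1;q)=\prod_{i=0}^{n-2}[\alpha+\beta+i]$, and hence $E_n(1;q)=\prod_{i=0}^{n-1}[\alpha+\beta+i]$.
\end{proof}
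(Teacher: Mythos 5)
Your proof is correct, but it takes a different route from the paper's primary argument. The paper proves \eqref{sum of eulerian numbers} by starting from the generating identity \eqref{carlitz-E_n(x;q)}, writing $E_n(1;q)=(q;q)_{n+\alpha+\beta-1}\lim_{x\to 1^-}(1-x)\sum_{j\ge 0}x^j\qbin{j+\alpha+\beta-1}{j}[j+\beta]^n$, and invoking Abel's lemma to replace the limit of the Abel sum by $\lim_{j\to\infty}\qbin{j+\alpha+\beta-1}{j}[j+\beta]^n=\bigl((q;q)_{\alpha+\beta-1}(1-q)^n\bigr)^{-1}$; this is exactly the route you sketched and then set aside. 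Your chosen argument --- summing the coefficient recurrence \eqref{rec-a-butler} over $k$, collapsing $[k+\beta]+q^{k+\beta}[n-1-k+\alpha]=[n+\alpha+\beta-1]$, and iterating --- is precisely the alternative the paper itself records in the Remark immediately following the corollary (there phrased as setting $x=1$ in \eqref{rec:XD}). Your version has the advantage of being purely formal: it avoids the analytic limit and the implicit convergence hypothesis $|q|<1$ needed for the Abel-summation step, at the cost of not reusing the already-established identity \eqref{carlitz-E_n(x;q)}. One caveat: in your informal discussion the displayed ``key algebraic identity'' $q^{\beta+k-1}[n-k+\alpha]+[k+\beta]=[n+\alpha+\beta-1]$ is false as written (the left side equals $[n+\alpha+\beta-1]+q^{\beta+k-1}$); this is only a slip in the pre-shift bookkeeping, and your formal proof correctly performs the index shift first, pairing $[k+\beta]$ with $q^{\beta+k}[n-1-k+\alpha]$, so the final argument stands.
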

\begin{proof} 
   Note that 
   \[
     \frac{E_{n}(1,q)}{(q;q)_{n+\alpha+\beta-1}}=    \lim_{x\to 1^-} \frac{E_{n}(x,q)}{(xq;q)_{n+\alpha+\beta-1}}.
\]
Combining  \eqref{carlitz-E_n(x;q)} and Abel's lemma we have
          \begin{align*}
   E_{n}(1,q)&=(q;q)_{n+\alpha+\beta-1}
 \lim_{x\to 1^-}(1-x) \sum_{j\ge 0}x^j\qbin{j+\alpha+\beta-1}{j}[j+\beta]^n\\
 &=(q;q)_{n+\alpha+\beta-1}\lim_{j\to +\infty}\qbin{j+\alpha+\beta-1}{j}[j+\beta]^n\\
 &=\frac{(q;q)_{n+\alpha+\beta-1}}{(q;q)_{\a+\beta-1}(1-q)^n},
    \end{align*}
which is equivalent to 
 \eqref{sum of eulerian numbers}.
\end{proof}
\begin{remark}  
 Alternatively, setting $x=1$ in \eqref{rec:XD}  yields  
 $$E_{n}(1; q) =[n-1+\alpha+\beta]E_{n-1}(1;q).
 $$
 As $E_{0}(1; q)=1$,
we obtain  \eqref{sum of eulerian numbers} by iteration. Moreover, 
we derive  from \eqref{rec-a-butler} that
\begin{equation}
    E_{n,0}(\alpha,\beta,q)=[\beta]^n, \quad E_{n,n}(\alpha,\beta,q)=q^{n\beta+\binom{n}{2}}[\alpha]^n.
\end{equation}
\end{remark}
Let $\sigma=\sigma_1\sigma_2\dots\sigma_n\in\Sym_n$. 
For an integer $r\geq 1$,  as a generalization of descents, 
the set of \emph{r-descents} of $\sigma$  is defined by, see \cite{FS70},
\begin{equation}\label{r-des}
r\mathrm{Des}(\sigma):=\{i\in[n-1]:\sigma_i\ge\sigma_{i+1}+r\}.
\end{equation}
The cardinality of $r\mathrm{Des}(\sigma)$ is denoted by 
 $r\des(\sigma)$. 

 Rawlings~\cite{Raw81} introduced
the $r$-\emph{major index} as
\begin{equation}
    r\maj(\sigma):=\sum_{i\in r\mathrm{Des}(\sigma)}i+|r\mathrm{Inv}(\sigma)|,
\end{equation}
where 
\[r\mathrm{Inv}(\sigma):=\{(i,j): 1\leq i<j\leq n,\; \sigma_i>\sigma_j>\sigma_i-r\}.
\]
He then defined the $(q,r)$-Eulerian numbers
 \begin{equation}     A[n,k;r]:=\sum_{\substack{\sigma\in\Sym_n\\r\des(\sigma)=k}}q^{r\maj(\sigma)}.
 \end{equation}
 \begin{prop} For $r\ge 1$ and  $0\le k\le n$, we have
     \begin{equation}\label{r-maj:relation}
    A[n+r,k;r]=[r]!\cdot E_{n,k}(1,r,q).
\end{equation}
 \end{prop}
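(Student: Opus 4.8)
The plan is to deduce \eqref{r-maj:relation} from a recurrence for Rawlings' numbers that matches the scaled recurrence \eqref{rec-a-butler}. Write $N=n+r$. By induction on $N$ it suffices to prove: (i) the base case $A[r,0;r]=[r]!$ and $A[r,k;r]=0$ for $k\ge1$, and (ii) for every $N\ge r+1$,
\begin{equation}\label{aim-rawlings}
A[N,k;r]=[k+r]\,A[N-1,k;r]+q^{r+k-1}\,[N-r-k+1]\,A[N-1,k-1;r],
\end{equation}
since multiplying \eqref{rec-a-butler} (taken at $\alpha=1$, $\beta=r$) by $[r]!$ gives precisely \eqref{aim-rawlings} with $A[N,k;r]$ replaced by $[r]!\,E_{N-r,k}(1,r,q)$, and $E_{0,0}(1,r,q)=1$ matches the base case. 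The base case itself is immediate: for $\tau\in\Sym_r$ one has $\tau_i-\tau_{i+1}\le r-1<r$ for all $i$, hence $r\Des(\tau)=\emptyset$ and $r\mathrm{Inv}(\tau)=\mathrm{Inv}(\tau)$, so $A[r,k;r]=0$ for $k\ge1$ and $A[r,0;r]=\sum_{\tau\in\Sym_r}q^{\inv(\tau)}=[r]!$.

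To prove \eqref{aim-rawlings} I adapt the insertion argument used for Theorem~\ref{main theorem1}: every $\tau\in\Sym_N$ is obtained uniquely by inserting the letter $N$ into one of the $N$ gaps of a permutation $\sigma\in\Sym_{N-1}$. Call a value of $\sigma$ \emph{small} if it is $\le N-r$ and \emph{big} otherwise; then $\sigma$ has $N-r$ small and $r-1$ big values, the second entry of every $r$-descent of $\sigma$ is small (if $\sigma_i\ge\sigma_{i+1}+r$ then $\sigma_{i+1}\le N-r-1$), and $N$ immediately followed by a small (resp.\ big) value is (resp.\ is not) an $r$-descent. Hence inserting $N$ raises $r\des$ by one exactly at a gap lying immediately before a small value that is \emph{not} itself preceded by an $r$-descent of $\sigma$, and leaves $r\des$ unchanged at the remaining gaps, which are: the final gap, the $r-1$ gaps immediately before a big value, and the gaps splitting an $r$-descent of $\sigma$. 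Thus when $r\des(\sigma)=k$ there are exactly $1+(r-1)+k=r+k$ such ``non-raising'' gaps, while when $r\des(\sigma)=k-1$ there are exactly $N-(r+k-1)=N-r-k+1$ ``raising'' gaps.

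The crux is the change in $r\maj$. Decomposing $r\maj(\tau)-r\maj(\sigma)$ as the change in $\sum_{i\in r\Des}i$ plus the change in $|r\mathrm{Inv}|$, one computes that inserting $N$ just after position $i$ of $\sigma$ adds to $|r\mathrm{Inv}|$ the number of big values of $\sigma$ in positions $>i$, and adds to $\sum_{i\in r\Des}i$ the number of $r$-descents of $\sigma$ in positions $>i$, plus a further $1$ if $i\in r\Des(\sigma)$, plus a further $i+1$ if the gap is raising. Using that for each $i$ at most one of ``position $i+1$ of $\sigma$ carries a big value'' and ``$i\in r\Des(\sigma)$'' can hold, one shows by a short bookkeeping argument --- listing the non-raising gaps from right to left and the raising gaps from left to right, and verifying that $r\maj(\tau)-r\maj(\sigma)$ increases by exactly $1$ between consecutive gaps of each kind, with the final gap giving $0$ and the rightmost raising gap giving $N-1$ --- that over the $r+k$ non-raising gaps of a $\sigma$ with $r\des(\sigma)=k$ the quantity $r\maj(\tau)-r\maj(\sigma)$ takes each value of $\{0,1,\dots,r+k-1\}$ exactly once, and that over the raising gaps of a $\sigma$ with $r\des(\sigma)=k-1$ it takes each value of $\{r+k-1,r+k,\dots,N-1\}$ exactly once. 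Since $\sum_{j=0}^{r+k-1}q^j=[k+r]$ and $\sum_{j=r+k-1}^{N-1}q^j=q^{r+k-1}[N-r-k+1]$, summing $q^{r\maj(\tau)}$ over all $\tau\in\Sym_N$ with $r\des(\tau)=k$ and grouping by the underlying $\sigma\in\Sym_{N-1}$ yields \eqref{aim-rawlings}.

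The routine parts here are the base case and the reduction to \eqref{aim-rawlings}; the one delicate ingredient --- and the only place where the hypothesis $N\ge r+1$ (equivalently, the presence of small values) is genuinely needed --- is the bookkeeping lemma pinning down the two consecutive ranges of values of $r\maj(\tau)-r\maj(\sigma)$. (Alternatively, \eqref{aim-rawlings} together with the base case yields the $q$-Worpitzky identity $\sum_{k}A[m,k;r]\,x^k=(x;q)_{m+1}\sum_{j\ge0}x^j\frac{[j+r]!}{[j]!}[j+r]^{m-r}$ for $m\ge r$, whence \eqref{r-maj:relation} follows by comparison with \eqref{carlitz-E_n(x;q)} via $[r]!\,\qbin{j+r}{j}=\frac{[j+r]!}{[j]!}$.)
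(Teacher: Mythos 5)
Your proof is correct and, at the top level, takes the same route as the paper: both reduce \eqref{r-maj:relation} to matching the recurrence \eqref{rec-a-butler} at $(\alpha,\beta)=(1,r)$ against a recurrence for $A[n,k;r]$ together with the initial value $A[r,0;r]=[r]!$. The difference is that the paper simply cites Rawlings for the recurrence \eqref{recurrence-r-q-eulerian} and for the initial value, whereas you prove both from scratch: the base case via the observation that $r\Des(\tau)=\emptyset$ and $r\mathrm{Inv}(\tau)=\mathrm{Inv}(\tau)$ for $\tau\in\Sym_r$, and the recurrence via an insertion argument in the spirit of the paper's proof of Theorem~\ref{main theorem1}. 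I checked your bookkeeping: at the gap after position $i$ the change in $r\maj$ is $B(i)+D(i)+[i\in r\Des(\sigma)]$ for non-raising gaps and $B(i)+D(i)+(i+1)$ for raising gaps (with $B,D$ counting big values and $r$-descents to the right), and since each non-raising gap other than the final one contributes exactly one to ``big value at position $i+1$ or $i\in r\Des(\sigma)$'' (these being mutually exclusive), the increments between consecutive gaps of each kind are indeed $1$, yielding the ranges $\{0,\dots,r+k-1\}$ and $\{r+k-1,\dots,N-1\}$ as claimed. So your argument is a self-contained version of the paper's; what it buys is independence from the citation to Rawlings, at the cost of the extra insertion analysis.
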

 \begin{proof}
     It is known~\cite{Raw81} that the $(q,r)$-Eulerian numbers satisfy the recurrence 
 \begin{equation}\label{recurrence-r-q-eulerian}
     A[n,k;r]=[k+r]A[n-1,k;r]+q^{k+r-1}[n+1-k-r]A[n-1,k-1;r],
 \end{equation}
 for $n\ge 1$, $0\le k\le n-r$ and  $A[r,0;r]=[r]!.$  Now, shifting  $n\to n+r$ Eq.~\eqref{recurrence-r-q-eulerian} becomes
\begin{equation}\label{rawlings r-maj recurrence}
     A[n+r,k;r]=[k+r]A[n+r-1,k;r]+q^{k+r-1}[n+1-k]A[n+r-1,k-1;r]
\end{equation}
 for $0\le k\le n$.
Comparing \eqref{rawlings r-maj recurrence} 
 with  recurrence~\eqref{rec-a-butler} and their initial  values at $(n,k)=(0,0)$, we derive \eqref{r-maj:relation}.
 \end{proof}
Combining Eq.~\eqref{carlitz-E_n(x;q)} and~\eqref{r-maj:relation}, we derive 
\begin{equation}~\label{q-hit} 
\frac{\sum_{0\le k\le n}A[n+r,k;r]x^k}{(x;q)_{n+1+r}}=\sum_{j\ge 0}x^j\frac{[j+r]!}{[j]!}[j+r]^n.
\end{equation}
From the above equality, we see that $A[n+r,k;r]$ for $0\le k\le n$ can be seen as $q$-hit numbers of Garsia-Remmel~\cite{GR86}.

\section{Concluding remarks}
In~\eqref{rec-a-butler}, let us define
\begin{equation}\label{E-star}
    E_{n,k}(\alpha,\beta,q)= 
    q^{k\beta +k(k-1)/2}\cdot E^{\star}_{n,k}(\alpha,\beta,q).
\end{equation}
Then 
\begin{equation}\label{rec-star}
        E^{\star}_{n,k}(\alpha,\beta,q)=
        [n-k+\alpha]E^{\star}_{n-1,k-1}(\alpha,\beta,q)+[k+\beta]E^{\star}_{n-1,k}(\alpha,\beta,q).
    \end{equation} 
It follows  that  
\begin{equation}\label{q-sym}
    E^{\star}_{n,k}(\alpha,\beta,q)=E^{\star}_{n,n-k}(\beta,\alpha,q) \qquad (0\le k\le n).
\end{equation}

Barbero et al.~\cite{BSV15} introduced a three-parameter  generalization of the standard Eulerian numbers $\angbinom{n}{k}^{(\nu)}_{(s, t)}$ for  integer $\nu\ge1$,  called  the  \emph{$\nu$-order $(s,t)$-Eulerian numbers}. When $\nu=1$, the numbers
$\angbinom{n}{k}_{(s,t)}:=\angbinom{n}{k}^{(1)}_{(s,t)}$ satisfy  the following recurrence  
\begin{equation}\label{eq:nu-st-eulerian}
  \angbinom{n}{k}_{(s,t)}
  = (k+s)\,\angbinom{n-1}{k}_{(s,t)}
    + \bigl(n - k + t\bigr)\,
      \angbinom{n-1}{k-1}_{(s,t)}+\delta_{k0}\delta_{n0},
\end{equation}
with the additional conditions 
\(
  \angbinom{n}{k}_{(s,t)} = 0
\)
if $n<0$ or $k<0$.
Comparing \eqref{eq:nu-st-eulerian} with~\eqref{rec-a-butler} (or \eqref{rec-star}) with $q=1$, we derive 
\begin{equation}\label{link of two types}
\angbinom{n}{k}_{(s,t)}=E_{n,k}(t,s,1).   
\end{equation}
\begin{remark}
From \eqref{euler's recurrence} and \eqref{eq:nu-st-eulerian} we derive
\begin{equation}
\angbinom{n}{k}=\angbinom{n}{k}_{(1,0)}, \quad 
\angbinom{n+1}{k}=\angbinom{n}{k}_{(1,1)}.
\end{equation}
Setting $q=1$ in Eq.~\eqref{carlitz-E_n(x;q)} we obtain
\begin{equation}\label{Eq-Carlitz-alpha-beta-S_n}
\frac{E_{n}(x\,|\,\alpha, \beta,1)}{(1-x)^{n+\alpha+\beta}}=
   \sum_{j\ge 0}x^j (j+\beta)^n\binom{\alpha+\beta+j-1}{j}.
\end{equation}
This is Proposition 12 in \cite{BSV15} by \eqref{link of two types}.  Setting $q=1$ in \eqref{q-exp-GF} we obtain 
\begin{equation}
    \sum_{n\geq 0}E_{n}(x\,|\,\alpha, \beta,1)\frac{z^n}{n!}=\left(\frac{1-x}{1-x e^{(1-x)z}}\right)^{\alpha}
    \left(\frac{1-x}{e^{(x-1)z}-x}\right)^{\beta},
\end{equation}
 which is   equivalent to~\eqref{CS-egf}.  
\end{remark}

The \emph{$r$-Eulerian numbers}, denoted by $A(n,k;r)$, count
the number of  permutations in $\Sym_{n}$ that have exactly  $k$ $r$-descents (see \eqref{r-des}), 
that is,
\begin{equation}
    A(n,k;r):=|\{\sigma\in\Sym_n:\mathrm{rdes}(\sigma)=k\}|.
\end{equation}
From~\eqref{rawlings r-maj recurrence} with $q=1$, we have 
\begin{equation}\label{r-eulerian}
     A(n+r,k;r)=(k+r)A(n+r-1,k;r)+(n+1-k)A(n+r-1,k-1;r)
\end{equation}
 for $0\le k\le n$.
Setting $(\alpha,\beta)=(r,1)$ in~\eqref{eq:nu-st-eulerian} and combining with~\eqref{r-eulerian} and their initial value at $(n,k)=(0,0)$, we derive
\begin{equation}\label{link-r-euler}
    \angbinom{n}{k}_{(r,1)}=\frac{1}{r!}A(n+r,k;r).
\end{equation}
 \begin{remark}
 In view of \eqref{link of two types} and \eqref{link-r-euler} 
 the $1$-order $(s,t)$-Eulerian numbers coincide with  the Carlitz-Scoville numbers and also encompass the $r$-Eulerian numbers. This fact was overlooked in \cite[p.2]{BSV15}, where  it is stated that  the Carlitz-Scoville numbers and the $r$-Eulerian numbers do not fall within  their general framework of Eulerian numbers.
 \end{remark}

The identities~\eqref{carlitz-E_n(x;q)},\eqref{worpitzky} and \eqref{q-hit}
deserve to  have combinatorial interpretations.
A combinatorial proof of \eqref{r-maj:relation}  with $q=1$ is given in \cite{FS70}. Moreover,
Butler~\cite{Bu04} originally obtained the permutation interpretation 
of the $q$-Eulerian numbers  $E_{n,k}(0,\beta,q)$ via the combinatorics of rook configurations (see also  Haglund~\cite{Ha96} and  Garsia and Remmel~\cite{GR86}), it would be  interesting  to extend our results to rook configurations.

\subsection*{Acknowledgement}
The first author is supported by the China Scholarship Council (No. 202206220034).




\bibliographystyle{plain}
\bibliography{reference2}


\end{document}